\documentclass[journal,onecolumn]{IEEEtran}

\usepackage{amsmath}
\usepackage{amssymb}
\usepackage{amsthm}
\usepackage{caption2}
\usepackage{multirow}

\usepackage{enumitem}
\usepackage{bbm}
\usepackage{amsfonts}
\usepackage{mathrsfs}
\usepackage{graphicx}
\usepackage{epic}
\usepackage{epstopdf}
\usepackage{color}
\usepackage{cite}
\usepackage{mathtools}
\usepackage{enumerate}
\usepackage{lineno}
\usepackage{url}
\usepackage{float}
\usepackage{color}
\usepackage{bm}
\usepackage{tikz}

\usepackage[numbers]{natbib}
% THEOREM Environments ---------------------------------------------------
\newtheorem{theorem}{Theorem}[section]

\newtheorem{corollary}[theorem]{Corollary}

\newtheorem{definition}[theorem]{Definition}

\begin{document}
\title{Improved Lower Bounds for Strongly Separable Matrices and Related Combinatorial Structures}

\author{Bingchen Qian, Xin Wang and Gennian Ge
\thanks{The research of X. Wang was supported by the National Natural Science Foundation of China under Grant No. 11801392.  The research of G. Ge was supported by the National Natural Science Foundation of China under Grant No. 11971325, National Key Research and Development Program of China under Grant Nos. 2020YFA0712100 and 2018YFA0704703, and Beijing Scholars Program.}%Research supported by the National Natural Science Foundation of China under Grant No. 11971325, National Key Research and Development Program of China under Grant Nos. 2020YFA0712100 and 2018YFA0704703, and Beijing Scholars Program.}
\thanks{B. Qian is with the School of Mathematical Sciences, Capital Normal University, Beijing 100048, China. He is also  with the School of Mathematical Sciences, Zhejiang University, Hangzhou 310027, Zhejiang, China (email: qianbingchen@zju.edu.cn).}
\thanks{X. Wang is with the Department of Mathematics, Soochow University, Suzhou 215005, Jiangsu, China (email: xinw@suda.edu.cn).}
\thanks{G. Ge is with the School of Mathematical Sciences, Capital Normal University,
Beijing 100048, China (e-mail: gnge@zju.edu.cn).}
}

\maketitle

%%% ----------------------------------------------------------------------

%\date{}
%\title{}

\begin{abstract}

In nonadaptive group testing, the main research objective is to design an efficient algorithm to identify a set of up to $t$ positive elements among $n$ samples with as few tests as possible. Disjunct matrices and separable matrices are two classical combinatorial structures while one provides a more efficient decoding algorithm and the other needs fewer tests, i.e., larger rate. Recently, a notion of strongly separable matrix has been introduced, which has the same identifying ability as a disjunct matrix, but has larger rate. In this paper, we use a modified probabilistic method to improve the lower bounds for the rate of strongly separable matrices. Using this method, we also improve the lower bounds for some well-known combinatorial structures, including locally thin set families and cancellative set families.

%Secure codes are widely-studied combinatorial structures which were introduced for traitor tracing in broadcast encryption. To determine the maximum size of such
%structures is the main research objective. In this paper, we investigate the lower bounds for secure codes and their related structures. First, we give some improved lower bounds for the rates of $2$-frameproof codes and $\overline{2}$-separable codes for slightly large alphabet size. Then we improve the lower bounds for the rate of some related structures, i.e., strongly $2$-separable matrices and $2$-cancellative set families. Finally, we give a general method to derive new lower bounds for strongly $t$-separable matrices and $t$-cancellative set families for $t\ge 3.$

\medskip
\noindent{\it Index Terms--} Nonadaptive combinatorial group testing, strongly separable matrices, probabilistic method, locally thin set families, cancellative set families

\smallskip

%\noindent {{\it AMS subject classifications\/}:  .}

\end{abstract}

\section{Introduction}

Group testing was introduced by Dorfman \cite{10.2307/2235930} in 1943 and has been well-known for its various applications in blood testing, chemical leak testing, electric shorting detection, codes, multi-access channel communication and so on.  There are $n$ samples each can be either positive (used to be called defective) or negative (used to be called good). The problem is to identify all positive samples. Instead of testing one by one, group testing was proposed to pool all the samples into groups and perform a test to each group. If the testing outcome of a group is positive, it means that at least one positive sample is contained in this group. If the testing outcome of a group is negative, then this group contains no positive samples. The goal of group testing is to minimize the number of such tests in identifying all the positive samples.

There are two general types of group testing algorithms, adaptive (or sequential) and nonadaptive. An adaptive algorithm conducts the tests one by one and allows a later test to use the outcomes of all previous tests. A nonadaptive algorithm specifies all tests simultaneously, thus forbidding using the outcome information of one test to design another test. Adaptive algorithms require fewer number of tests in general, since extra information allows for more efficient test designs. Nonadaptive algorithms permit to conduct all tests simultaneously, thus saving the time for testing if not the number of tests. On the other hand, group testing can be roughly divided into probabilistic and combinatorial models. In probabilistic group testing, the positive samples are assumed to follow some probability distribution, while in combinatorial group testing, the number of positive samples is usually assumed to be no more than a fixed positive integer. In this paper, we consider only nonadaptive combinatorial group testing.

It was shown in \cite{MR1742957} that a $t$-disjunct matrix ($t$-DM) and a $\overline{t}$-separable matrix ($\overline{t}$-SM) could be utilized in nonadaptive group testing to identify any set of positives with size no more than $t$, but both have their advantages and disadvantages. Roughly speaking, a $t$-DM provides a more efficient identifying algorithm than a $\overline{t}$-SM, but a $\overline{t}$-SM implies a higher rate than a $t$-DM. To combine the advantages of these two structures, in \cite{MR4192047}, Fan et al. introduced a new notion of strongly $t$-separable matrix ($t$-SSM) for nonadaptive group testing which has weaker requirements than a $t$-DM but has the same identifying ability as a $t$-DM.

In this paper, we provide some new probabilistic approaches to improve the lower bounds for the rate of $t$-SSMs and some related well-known combinatorial structures, including locally thin set families and cancellative set families. We first introduce their definitions and recall some backgrounds on them.

\subsection{Strongly separable matrices}

A nonadaptive combinatorial group testing scheme can be represented as a $0$-$1$ (or binary) matrix $B=(b_{ij})$ whose columns are labeled by samples and rows by tests. Thus $b_{ij}$ specifies that test $i$ contains sample $j.$ It is more convenient to view a $0$-$1$ column $c_j$ as the incidence vector of subset $\{i\mid b_{ij}=1\}.$ Then we can talk about the union of a set of columns, which is nothing but the boolean sum of the corresponding $0$-$1$ columns. We first give the definitions of disjunct matrices and separable matrices which can be found in \cite{MR1742957}.
\begin{definition}
  Let $n,M,t\ge2$ be integers and $B$ be a binary matrix of size $n\times M$.
  \begin{itemize}
    \item $B$ is called a $t$-disjunct matrix, or briefly $t$-DM, if the Boolean sum of any $t$ column vectors of $B$ does not cover any other one.
    \item $B$ is called a $\overline{t}$-separable matrix, or briefly $\overline{t}$-SM, if the Boolean sums of  $\le t$ column vectors of $B$ are all distinct.
    \item $B$ is called a $t$-separable matrix, or briefly $t$-SM, if the Boolean sums of $t$ column vectors of $B$ are all distinct.
  \end{itemize}
\end{definition}

Let $M,n,t$ be integers with $M\ge n\ge 2$, and $B$ be a binary matrix of size $n\times M.$ Denote $[n] = \{1, \ldots, n\}$ and $[M] =\{1, \ldots, M\}.$ Let $\mathcal{F}= \{c_1,\ldots , c_M\} \subseteq \{0, 1\}^n$ be the set of column vectors of $B$ where $c_j = (c_j(1), \ldots , c_j(n)) \in \{0, 1\}^n$ for any $j\in [M].$ We say a vector $c_j$ covers a vector $c_k$ if for any $i\in [n], c_k(i) = 1$ implies $c_j(i) = 1.$

\begin{definition}\label{def of ssm}
  An $n\times M$ binary matrix $B$ is called a strongly $t$-separable matrix, or briefly $t$-SSM, if for any $\mathcal{F}_0\subseteq\mathcal{F}$ with $|\mathcal{F}_0|=t,$ we have
  \begin{equation*}
    \bigcap_{\mathcal{F}'\in\mathcal{U}(\mathcal{F}_0)}\mathcal{F}'=\mathcal{F}_0,
  \end{equation*}
  where
  \begin{equation*}
    \mathcal{U}(\mathcal{F}_0)=\left\{\mathcal{F}'\subseteq\mathcal{F}:\bigvee_{c\in\mathcal{F}_0}c=\bigvee_{c\in\mathcal{F}'}c \right\}.
  \end{equation*}
\end{definition}

%
%The following relations are well known \cite{MR1742957}:
%\begin{equation*}
%  \overline{t+1} \text{-separable}\Rightarrow t\text{-disjunct} \Rightarrow \overline{t}\text{-separable}.
%\end{equation*}
%
%%\begin{definition}
%%  A code $\mathcal{C}\subseteq \mathbb{F}_q^n$ is a $\overline{t}$-separable code if for all distinct $X,X'\subset \mathcal{C}$ with $|X|\le t$ and $|X'|\le t,$ we have desc($X$)$\neq$ desc($X'$).
%%\end{definition}
%
%
%
%\begin{lemma}[\cite{MR4192047}]\label{lemma t-ssm is overlien-t-sm}
%  A $t$-DM is a $t$-SSM and a $t$-SSM is a $\overline{t}$-SM.
%\end{lemma}
The following relations are well known, see \cite{MR1742957, MR4192047}.
%\begin{align}\label{relationship}
%  \overline{t+1}&\text{-separable}\Rightarrow t\text{-disjunct}\Rightarrow \text{strongly } t\text{-separable} \Rightarrow \overline{t}\text{-separable} \nonumber\\
%  &\Downarrow  \\
%   (t+1)&\text{-separable} \nonumber .
%\end{align}
\begin{align}\label{relationship}
  \overline{t+1}\text{-separable}\Rightarrow t\text{-disjunct}\Rightarrow& \text{strongly } t\text{-separable} \Rightarrow\overline{t}\text{-separable}.
\end{align}

In \cite{MR2282446}, Du and Hwang proved that a binary matrix is $\overline{t+1}$-separable if and only if it is $(t+1)$-separable and $t$-disjunct. Here the connection between $\overline{t}$-separable and strongly $t$-separable is introduced.

\begin{theorem}
  Let $t$ be a positive integer and $B$ be a $(t+1)$-SM. Then $B$ is $\overline{t}$-separable if and only if $B$ is strongly $t$-separable.
\end{theorem}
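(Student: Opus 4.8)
The plan is to prove the two implications separately and to observe that only one of them actually needs the $(t+1)$-separability hypothesis. The \emph{if} direction --- that a strongly $t$-separable matrix is $\overline{t}$-separable --- is already recorded in the chain \eqref{relationship}, so I would simply invoke it and do no extra work. All of the content lies in the \emph{only if} direction: assuming $B$ is both $(t+1)$-separable and $\overline{t}$-separable, I would show it is strongly $t$-separable, arguing by contradiction.

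First I would unfold Definition~\ref{def of ssm}. Since $\mathcal{F}_0\in\mathcal{U}(\mathcal{F}_0)$ always holds, the intersection $\bigcap_{\mathcal{F}'\in\mathcal{U}(\mathcal{F}_0)}\mathcal{F}'$ is automatically contained in $\mathcal{F}_0$, so the strong separability condition can fail \emph{only} by some $\mathcal{F}'\in\mathcal{U}(\mathcal{F}_0)$ failing to contain all of $\mathcal{F}_0$. Thus, assuming $B$ is not strongly $t$-separable, I fix an $\mathcal{F}_0$ with $|\mathcal{F}_0|=t$ together with some $\mathcal{F}'\subseteq\mathcal{F}$ satisfying $\bigvee_{c\in\mathcal{F}'}c=\bigvee_{c\in\mathcal{F}_0}c=:u$ but $\mathcal{F}_0\not\subseteq\mathcal{F}'$, and I aim to contradict one of the two hypotheses.

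The argument then splits on $|\mathcal{F}'|$. If $|\mathcal{F}'|\le t$, then $\mathcal{F}_0$ and $\mathcal{F}'$ are two collections of at most $t$ columns with the same Boolean sum, so $\overline{t}$-separability forces $\mathcal{F}'=\mathcal{F}_0$, contradicting $\mathcal{F}_0\not\subseteq\mathcal{F}'$. If instead $|\mathcal{F}'|\ge t+1$, I would exploit $(t+1)$-separability: since $\mathcal{F}_0\not\subseteq\mathcal{F}'$ we have $|\mathcal{F}_0\cap\mathcal{F}'|\le t-1$, whence $|\mathcal{F}'\setminus\mathcal{F}_0|\ge (t+1)-(t-1)=2$. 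Choosing two distinct columns $d,d'\in\mathcal{F}'\setminus\mathcal{F}_0$ and noting $d,d'\subseteq u$ (as they lie in $\mathcal{F}'$), both $\mathcal{F}_0\cup\{d\}$ and $\mathcal{F}_0\cup\{d'\}$ are sets of exactly $t+1$ columns whose Boolean sum equals $u$; they are distinct because $d\ne d'$ and neither belongs to $\mathcal{F}_0$. This contradicts $(t+1)$-separability, completing the proof.

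The only genuinely delicate point --- and the step I would double-check most carefully --- is the counting inequality $|\mathcal{F}'\setminus\mathcal{F}_0|\ge 2$ in the second case: it is precisely the failure $\mathcal{F}_0\not\subseteq\mathcal{F}'$ (rather than mere inequality of the two collections) that guarantees at least two ``spare'' columns of $\mathcal{F}'$ lying outside $\mathcal{F}_0$, which is exactly what lets me pad $\mathcal{F}_0$ up to size $t+1$ in two different ways. Everything else is bookkeeping: that $d,d'\subseteq u$ keeps each padded Boolean sum equal to $u$, and that the two padded sets genuinely have cardinality $t+1$. I would also remark that the $(t+1)$-separability hypothesis enters only in the case $|\mathcal{F}'|\ge t+1$, which explains why the reverse implication needs no hypothesis beyond \eqref{relationship}.
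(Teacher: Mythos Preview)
Your proof is correct. It uses the same two ingredients as the paper --- $\overline{t}$-separability to handle witnesses $\mathcal{F}'$ of size at most $t$, and $(t+1)$-separability to handle larger ones --- but the organization differs. The paper argues directly: fixing $\mathcal{F}_1$ of size $t$, it asks whether some $\mathcal{F}_2$ of size exactly $t+1$ shares its Boolean sum, and in each case describes $\mathcal{U}(\mathcal{F}_1)$ explicitly (either $\{\mathcal{F}_1\}$ or $\{\mathcal{F}_1,\mathcal{F}_2\}$), concluding the intersection is $\mathcal{F}_1$. Your contradiction argument is tighter: the observation $\bigcap_{\mathcal{F}'\in\mathcal{U}(\mathcal{F}_0)}\mathcal{F}'\subseteq\mathcal{F}_0$ immediately localizes any failure to a single $\mathcal{F}'$ with $\mathcal{F}_0\not\subseteq\mathcal{F}'$, and the ``two spare columns'' step --- padding $\mathcal{F}_0$ with $d$ and with $d'$ to get two distinct $(t{+}1)$-sets with Boolean sum $u$ --- is a clean way to invoke $(t+1)$-separability that avoids the paper's somewhat awkward sub-case $\mathcal{F}_1\nsubseteq\mathcal{F}_2$. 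Both arguments are short; yours is the more streamlined of the two.
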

\begin{proof}
  The sufficiency follows from (\ref{relationship}). So we only need to prove the necessity. Let $B$ be a $(t+1)$-SM and $\mathcal{F}$ be the set of column vectors of $B.$
  Let $\mathcal{F}_1=\{c_1,\ldots,c_t\}\subseteq\mathcal{F}$ with size $t,$ assume that there exists $\mathcal{F}_2$ with size $t+1$ such that $\bigvee_{c\in \mathcal{F}_1}c=\bigvee_{c\in\mathcal{F}_2}c$. If $\mathcal{F}_1\nsubseteq\mathcal{F}_2,$ by the property of $(t+1)$-separable, we have that $\bigvee_{i=1}^{t}c_i\neq\left(\bigvee_{i=1}^{t}c_i\right)\vee c',$ for all $c'\notin\mathcal{F}_1,$  so $\mathcal{U}(\mathcal{F}_1)=\mathcal{F}_1$ by the property that $B$ is also $\bar{t}$-separable. Then we obtain $\bigcap_{\mathcal{F}'\in\mathcal{U}(\mathcal{F}_1)}\mathcal{F}'=\mathcal{F}_1$ implying that $\mathcal{F}_1$ satisfies the condition in % $M$ is a $t$-SSM according to
  Definition \ref{def of ssm}. If $\mathcal{F}_1\subseteq\mathcal{F}_2,$ let $\mathcal{F}_2=\{c_1,\ldots,c_t,c_{t+1}\},$ also by the property of $(t+1)$-separable, we have that  $\bigvee_{i=1}^{t}c_i\neq\left(\bigvee_{i=1}^{t}c_i\right)\vee c',$ for all $c'\neq c_i$ for $i\in\{1,2,\ldots,t+1\}.$ Thus, we have $\mathcal{U}(\mathcal{F}_1)=\{\mathcal{F}_1, \mathcal{F}_2\}.$ Similarly, we obtain $\bigcap_{\mathcal{F}'\in\mathcal{U}(\mathcal{F}_1)}\mathcal{F}'=\mathcal{F}_1$ implying that $\mathcal{F}_1$  satisfies the condition in %$M$ is a $t$-SSM according to
  Definition \ref{def of ssm}. If there is no $\mathcal{F}_2$ with size $t+1$ such that $\bigvee_{c\in \mathcal{F}_1}c=\bigvee_{c\in\mathcal{F}_2}c$, then $\bigvee_{i=1}^{t}c_i\neq\left(\bigvee_{i=1}^{t}c_i\right)\vee c',$ for all $c'\notin\mathcal{F}_1,$ meaning that $\bigvee_{i=1}^{t}c_i$ can not cover any other element not in $\mathcal{F}_1.$ Since $B$ is $\bar{t}$-separable, we obtain $\bigcap_{\mathcal{F}'\in\mathcal{U}(\mathcal{F}_1)}\mathcal{F}'=\mathcal{F}_1$ implying that $\mathcal{F}_1$ satisfies the condition in %$M$ is a $t$-SSM according to
   Definition \ref{def of ssm}. Since $\mathcal{F}_1$ can be chosen arbitrarily, $B$ is a $t$-SSM according to Definition \ref{def of ssm}.
  %Assume that $A$ is not $\overline{t+1}$-separable, otherwise we are done by (\ref{relationship}). Thus, there exist $\mathcal{F}_1,\mathcal{F}_2\subseteq\mathcal{F}$ such that $\bigvee_{c\in \mathcal{F}_1}c=\bigvee_{c\in\mathcal{F}_2}c$  with size $|\mathcal{F}_1|= t$ and $|\mathcal{F}_2|=t+1.$ Indeed, if $|\mathcal{F}_1|\le t-1,$ then each $A\in\mathcal{F}_2$ is covered by $\bigvee_{c\in \mathcal{F}_1}c,$ which is a contradiction since $M$ is $(t-1)$-disjunct by (\ref{relationship}). Let $\mathcal{F}_1=\{c_1,\ldots,c_t\}.$ If $\mathcal{F}_1\nsubseteq\mathcal{F}_2,$ by the property of $(t+1)$-separable, we have that $\bigvee_{i=1}^{t}c_i\neq\left(\bigvee_{i=1}^{t}c_i\right)\vee c',$ for all $c'\notin\mathcal{F}_1,$  so $\mathcal{U}(\mathcal{F}_1)=\mathcal{F}_1.$ Then we obtain $\bigcap_{\mathcal{F}'\in\mathcal{U}(\mathcal{F}_1)}\mathcal{F}'=\mathcal{F}_1$ implying that $M$ is a $t$-SSM according to Definition \ref{def of ssm}. If $\mathcal{F}_1\subseteq\mathcal{F}_2,$ let $\mathcal{F}_2=\{c_1,\ldots,c_t,c_{t+1}\},$ also by the property of $(t+1)$-separable, we have that  $\bigvee_{i=1}^{t}c_i\neq\left(\bigvee_{i=1}^{t}c_i\right)\vee c',$ for all $c'\neq c_i$ for $i\in\{1,2,\ldots,t+1\}.$ Thus, we have $\mathcal{U}(\mathcal{F}_1)=\{\mathcal{F}_1, \mathcal{F}_2\}.$ Similarly, we obtain $\bigcap_{\mathcal{F}'\in\mathcal{U}(\mathcal{F}_1)}\mathcal{F}'=\mathcal{F}_1$ implying that $M$ is a $t$-SSM according to Definition \ref{def of ssm}.
\end{proof}

Let $SSM(t,n), DM(t,n)$ and $S(\overline{t},n)$ denote the maximum possible number of columns of a $t$-SSM, a $t$-DM and a $\overline{t}$-SM with $n$ rows, respectively. Denote their largest rates as
\begin{align*}
  R(t) &= \overline{\lim\limits_{n\to \infty}}\frac{\log_2SSM(t,n)}{n}, \\
  R_D(t) &=\overline{\lim\limits_{n\to \infty}}\frac{\log_2DM(t,n)}{n}, \\
  R_S(\overline{t}) & =\overline{\lim\limits_{n\to \infty}}\frac{\log_2S(\overline{t},n)}{n}.
\end{align*}

The following theorem combines the best known upper and lower bounds about $R_D(t)$ and $R_S(\overline{t}),$ for more details, please refer to the references \cite{MR1633846,MR1742957,MR677569,MR711896,MR1017407}.
\begin{theorem}[\cite{MR1633846,MR1742957,MR677569,MR711896,MR1017407}]
  Let $t\ge 2$ be an integer. If $t\rightarrow\infty,$ then we have
  \begin{align*}
    \frac{1}{t^2\log_2e}(1+o(1))\le R_D(t)
                                \le R_S(\overline{t})
                                \le R_D(t-1)\le\frac{2\log_2(t-1)}{(t-1)^2}(1+o(1)),
  \end{align*}
  where $e$ is the base of the natural logarithm. Moreover,
  \begin{align*}
    0.1814 &\le  R_D(2)\le 0.3219,\\
    0.3135&\le R_S(2) \le 0.4998.
  \end{align*}
\end{theorem}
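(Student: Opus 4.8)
The plan is to break the five-term chain into its inner comparisons and its two extreme bounds, derive the inner comparisons directly from the containment relations in (\ref{relationship}), and import the two extreme asymptotic bounds, together with the numerical estimates for $t=2$, from the cited constructions and counting arguments. The key observation is that all the combinatorial work is encoded in the monotone implications already recorded, so that the rate inequalities follow by normalizing column counts.

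First I would dispatch the inner inequalities $R_D(t)\le R_S(\overline{t})\le R_D(t-1)$ using only (\ref{relationship}) and the definitions of the rates. The implication $t\text{-disjunct}\Rightarrow\overline{t}\text{-separable}$ shows that every $t$-DM with $n$ rows is in particular a $\overline{t}$-SM, whence $DM(t,n)\le S(\overline{t},n)$; applying $\tfrac{1}{n}\log_2(\cdot)$ and taking the limit superior yields $R_D(t)\le R_S(\overline{t})$. For the second inner inequality I would apply the first implication of (\ref{relationship}) with parameter $t-1$, namely $\overline{t}\text{-separable}\Rightarrow(t-1)\text{-disjunct}$, which gives $S(\overline{t},n)\le DM(t-1,n)$ for every $n$ and hence $R_S(\overline{t})\le R_D(t-1)$ after the same normalization. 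No further combinatorics is needed for these two steps.

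The two extreme bounds carry the analytic content. For the lower bound $\frac{1}{t^2\log_2 e}(1+o(1))\le R_D(t)$ I would invoke the probabilistic construction of $t$-disjunct matrices: sample the entries of an $n\times M$ matrix independently with a tuned bias, estimate the probability that a fixed column is covered by the Boolean sum of some $t$ others, delete the offending columns, and optimize the weight parameter so that the surviving matrix retains $M=2^{n/(t^2\log_2 e)(1+o(1))}$ columns. For the upper bound $R_D(t-1)\le\frac{2\log_2(t-1)}{(t-1)^2}(1+o(1))$ I would use the information-theoretic counting argument that limits the number of columns of any $(t-1)$-DM in terms of $n$. Both are precisely the results assembled in \cite{MR1633846,MR1742957,MR677569,MR711896,MR1017407}, so the proof reduces to citing them.

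Finally, the explicit constants $0.1814\le R_D(2)\le0.3219$ and $0.3135\le R_S(2)\le0.4998$ are the sharpest available evaluations of the smallest nontrivial case and are taken directly from the same references. The main obstacle is therefore not the assembly, which collapses to the containment chain, but the two asymptotic endpoints: the lower bound demands a careful expurgation analysis of the random construction and the upper bound a tight counting estimate, and it is these external inputs that supply all the genuine difficulty.
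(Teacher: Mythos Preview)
Your proposal is correct and matches the paper's treatment: the paper does not supply a proof of this theorem at all, presenting it purely as a compilation of known bounds from \cite{MR1633846,MR1742957,MR677569,MR711896,MR1017407}. Your additional observation that the inner inequalities $R_D(t)\le R_S(\overline{t})\le R_D(t-1)$ follow immediately from the containment chain (\ref{relationship}) is accurate and goes slightly beyond what the paper spells out, but is in the same spirit.
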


By the relationships among SSM, DM and SM, the following corollary is straightforward.
\begin{corollary}[\cite{MR4192047}]\label{rate of R(t) frome relations}
  Let $t\ge2$ be an integer. Then we have
  $$\frac{1}{t^2\log_2e}(1+o(1))\le R(t)\le\frac{2\log_2(t-1)}{(t-1)^2}(1+o(1))$$
  for $t\rightarrow\infty,$ and
  $$0.1814\le R(2)\le 0.4998.$$
\end{corollary}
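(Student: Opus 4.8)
The plan is to derive everything from the chain of implications in~(\ref{relationship}) together with the bounds on $R_D(t)$ and $R_S(\overline{t})$ recorded in the preceding theorem. The key observation is that the middle two implications, namely $t\text{-DM}\Rightarrow t\text{-SSM}\Rightarrow\overline{t}\text{-SM}$, say that every $t$-disjunct matrix is strongly $t$-separable and every strongly $t$-separable matrix is $\overline{t}$-separable. Since relaxing a structural requirement can only enlarge the class of admissible matrices, these containments translate directly into the inequalities
\begin{equation*}
  DM(t,n)\le SSM(t,n)\le S(\overline{t},n)
\end{equation*}
for every fixed $n$, because the maximizing matrix for the more restrictive class is still admissible for the less restrictive one.

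Next I would pass to rates. Taking $\log_2$ of each term, dividing by $n$, and taking the limit superior as $n\to\infty$ preserves the inequalities, since $\log_2$ is monotone increasing and $\overline{\lim}$ respects pointwise inequalities. This yields
\begin{equation*}
  R_D(t)\le R(t)\le R_S(\overline{t}),
\end{equation*}
which sandwiches the unknown quantity $R(t)$ between two rates for which bounds are already available.

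Finally, I would substitute the estimates from the previous theorem. For the asymptotic statement, the lower bound $\frac{1}{t^2\log_2 e}(1+o(1))\le R_D(t)$ supplies the left-hand side, while the upper chain $R_S(\overline{t})\le R_D(t-1)\le\frac{2\log_2(t-1)}{(t-1)^2}(1+o(1))$ supplies the right-hand side; combining these with the sandwich gives the claimed two-sided asymptotic bound for $R(t)$ as $t\to\infty$. For the case $t=2$, the same theorem gives $0.1814\le R_D(2)$ and $R_S(2)\le 0.4998$, so the sandwich immediately yields $0.1814\le R(2)\le 0.4998$.

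As for difficulty, there is no genuine obstacle: the corollary is a bookkeeping consequence of the implication chain and the previously stated bounds, which is precisely why the paper calls it \emph{straightforward}. The only care needed is to apply each containment in the correct direction, keeping track that a weaker structural requirement admits more matrices and hence a larger column count and rate, and to confirm that the limit superior preserves the chain of inequalities.
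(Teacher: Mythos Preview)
Your argument is correct and is exactly the ``straightforward'' derivation the paper has in mind: sandwich $R(t)$ between $R_D(t)$ and $R_S(\overline{t})$ via the implications in~(\ref{relationship}), then read off the asymptotic and the $t=2$ numerical bounds from the preceding theorem.
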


In \cite{MR4192047}, Fan et al. improved the lower bound for $R(2).$
\begin{theorem}[\cite{MR4192047}]
  $R(2)\ge 0.2213$.
\end{theorem}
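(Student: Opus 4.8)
The plan is to prove $R(2)\ge 0.2213$ by choosing the columns of an $n\times M$ matrix at random and then deleting a few of them; the gain over the $2$-disjunct bound $R_D(2)\ge 0.1814$ comes precisely from the fact that the $t=2$ case of Definition \ref{def of ssm} is strictly weaker than $2$-disjunctness. First I would rewrite that case as a single covering condition. Identifying each column $c_j$ with the set $A_j=\{i:c_j(i)=1\}$ and putting $U=A_a\cup A_b$, the requirement $\bigcap_{\mathcal F'\in\mathcal U(\{a,b\})}\mathcal F'=\{a,b\}$ says that $a$ and $b$ lie in \emph{every} subfamily whose union is $U$; since $\{a,b\}$ is itself such a subfamily, this is equivalent to asking that neither element can be dropped, that is,
\[
A_a\setminus A_b\not\subseteq\bigcup\bigl\{A_c:\ c\notin\{a,b\},\ A_c\subseteq U\bigr\}
\]
together with the same statement with $a$ and $b$ interchanged, for every pair $\{a,b\}$. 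I would note that this one condition already subsumes $\overline{2}$-separability (a containment $A_a\subseteq A_b$ is the case $A_a\setminus A_b=\emptyset$, and two distinct pairs with a common union force the right-hand side to cover $A_a\setminus A_b$), while it is still satisfied by many matrices that are not $2$-disjunct: a column is allowed to lie inside a pairwise union $A_a\cup A_b$ so long as it does not help cover the private part $A_a\setminus A_b$. This extra freedom is exactly what I want to cash in for a larger rate.

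Next I would set up the random model. Fix a density $\rho\in(0,1)$, put $w=\lfloor\rho n\rfloor$, and draw $M$ independent, uniformly random weight-$w$ subsets $A_1,\dots,A_M$ of $[n]$ (the constant-weight model keeps the large-deviation estimates clean). Call an ordered pair $(a,b)$ \emph{bad} if it fails the displayed condition. Because shrinking the family can only make the covering $A_a\setminus A_b\subseteq\bigcup\{\cdots\}$ harder, deleting columns never creates new bad pairs; hence if $\mathbb E[\#\text{bad pairs}]\le M/2$, I may delete one column from each bad pair and be left with a genuine $2$-SSM on at least $M/2$ columns, which gives the rate $\tfrac1n\log_2(M/2)$.

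The estimate of $\mathbb E[\#\text{bad pairs}]$ is the heart of the matter. I would bound it by a first-moment count over \emph{covering witnesses}: a bad pair $(a,b)$ is certified by columns $A_{c_1},\dots,A_{c_{k-2}}$, each contained in $U$, whose union covers $A_a\setminus A_b$. Summing over all witnesses using $k$ columns in total gives a contribution of order $M^{k}\,2^{-n\,g_k(\rho)(1+o(1))}$, where $g_k(\rho)$ is an entropy-type exponent read off from hypergeometric estimates for random weight-$w$ sets. The witnesses with one covering column ($k=3$) reproduce the $\overline{2}$-separability constraints, whereas the genuinely new configurations are those with two covering columns, namely $A_a\setminus A_b\subseteq A_c\cup A_d$ with $A_c,A_d\subseteq U$ ($k=4$); forbidding these is far less restrictive than forbidding the full disjunctness violation $A_c\subseteq A_a\cup A_b$, which is what keeps the rate above $R_D(2)$. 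The deletion requirement $\mathbb E[\#\text{bad pairs}]\le M/2$ forces $M^{k-1}2^{-n g_k(\rho)}\lesssim1$ for each $k$, i.e.\ $R\le g_k(\rho)/(k-1)$; writing $M=2^{Rn}$ and maximizing $\min_k g_k(\rho)/(k-1)$ over $\rho\in(0,1)$ produces the numerical value $0.2213$.

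The main obstacle is analytic rather than structural. Once the covering characterization is in hand the deletion step is routine (monotonicity of covering under column deletion is what makes it legitimate), so the real work is computing the exponents $g_k(\rho)$---in particular the $k=4$ two-column covering exponent, which, unlike the $\overline{2}$-separability exponents, is a genuine set-covering probability on a subuniverse of size $|U|$---and then carrying out the one-variable optimization. Pinning the bound at exactly $0.2213$ amounts to checking that at the optimal density it is this new $k=4$ covering constraint, and not one of the $\overline{2}$-separability constraints, that is active; this is the point at which the weaker-than-disjunct nature of $2$-SSMs is turned into the quantitative improvement.
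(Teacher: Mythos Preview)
This theorem is not proved in the paper at all: it is quoted from \cite{MR4192047} as background, and the paper's own contribution is the \emph{stronger} bound $R(2)\ge 0.2237$ in Theorem~\ref{thm of lower bound of R(2)}. So there is no in-paper argument for $0.2213$ to compare your proposal against line by line.

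That said, your route and the paper's route to its stronger bound are genuinely different. You work with an unstructured random code (i.i.d.\ constant-weight columns), rewrite the $2$-SSM condition as the covering statement ``$A_a\setminus A_b$ is not contained in the union of the other columns lying inside $A_a\cup A_b$'' (this reformulation is correct, as is your monotonicity remark justifying deletion), and then union-bound over witness configurations of each size $k$. The paper instead starts from a highly structured Shearer-type family $\mathcal H$: $[n]$ is cut into blocks of $45$, each block into $15$ triplets, and in every block one triplet is taken in full while one element is chosen from each of the remaining $14$ triplets subject to a mod-$3$ parity constraint. This built-in structure guarantees for free that $A\cup B\neq A\cup C$ for any three distinct members, so the only bad events left are $A\cup B=A\cup C_1\cup\cdots\cup C_s$ with $s\ge 2$; those are counted block by block after random sparsification. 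The structure is what buys the extra $0.0024$.

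Where your proposal falls short is not in strategy but in execution: the exponents $g_k(\rho)$ are never written down, the tail $\sum_k M^{k-1}2^{-ng_k(\rho)}$ is never shown to be controlled, and the assertion that the optimum over $\rho$ is $0.2213$ is stated without any calculation. The $k=4$ term you highlight is a genuine set-cover probability (two random weight-$w$ sets inside $U$ jointly covering $A_a\setminus A_b$), and extracting its exponential rate and then performing the optimization \emph{is} the theorem; none of that is present. So the plan is reasonable, but as it stands it is a sketch of a strategy rather than a proof, and it does not match anything in the present paper.
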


\subsection{Locally thin set families}
In this subsection, we introduce some combinatorial structures whose lower bounds can be improved by the method we use for $t$-SSMs.  Let $\mathcal{F}$ be a family of subsets of a ground set of $n$ elements. We can suppose w.l.o.g. that our ground set is $[n]= \{1,2,\ldots,n\}.$ Following \cite{MR1816097, MR1810146, MR1845141}, we say that the family is $k$-locally thin if, for any $k$ of its distinct members, at least one point $i\in [n]$ is contained in exactly one of them. %We can also consider it as a family of $0$-$1$ vectors of length $n.$ It is $k$-locally thin if, for any $k$ distinct vectors, there exists at least one column, which sum up to $1.$

There are two generalizations for $k$-locally thin family.
\begin{itemize}
    \item (see \cite{MR1860438}) A family is called $k$-locally $2$-thin if, for any $k$ distinct members, there exists at least one point contained in $1$ or $2$ members.
  \item (see \cite{MR2900055}) A family is called locally $(k,b)$-thin if, for any $k$ distinct members, there exist at least $b$ points such that each of them is contained in exactly $1$ member.%consider its corresponding $0$-$1$ vectors, for any $k$ distinct members, there exist at least $b$ different columns, which sum up to $1.$
\end{itemize}
Notice that if a family is $k$-locally thin then it is also $(k+1)$-locally $2$-thin.

Since the first generalization is weaker than the original one and the second is stronger, we
let $WM(n,k,2)$ and $SM(n, k, b)$ denote the maximum cardinality of a $k$-locally $2$-thin family and a locally $(k,b)$-thin family of subsets of $[n]$, respectively. Denote their largest rates as
\begin{align*}
  wt(k,2) &= \overline{\lim\limits_{n\to \infty}}\frac{\log_2WM(n,k,2)}{n}, \\
  st(k,b) &=\overline{\lim\limits_{n\to \infty}}\frac{\log_2SM(n,k,b)}{n}.
\end{align*}

This problem was investigated by Alon, Fachini, K\"orner and Monti \cite{MR1816097, MR1810146, MR1845141}. They proved that $\frac{1}{3}(6-\log_{2}37)\le st(4,1)<0.4561\ldots$ and $st(k,1)<2/k$ for all even $k,$ and
$$ \Omega\left(\frac{1}{k}\right)\le st(k,1)\le O\left(\frac{\log_{2}k}{k}\right)<0.793$$
for all $k.$ This is a notoriously hard problem. In particular, we do not even know whether $st(3,1)<1.$

Later, in \cite{MR1860438}, the authors derived new lower and upper bounds for $st(5,1)$ and $wt(6,2).$
\begin{theorem}\rm (\cite{MR1860438})
  $$0.1900<\frac{\log_{2}\frac{625}{369}}{4}\le st(5,1)\le wt(6,2)<0.596.$$
\end{theorem}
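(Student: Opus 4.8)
The plan is to read the four-term chain as four separate tasks and dispatch them in increasing order of difficulty. The leftmost inequality $0.1900<\tfrac{1}{4}\log_2\tfrac{625}{369}$ is a numerical check: since $625/369\approx1.694$ one gets $\tfrac14\log_2(625/369)\approx0.1901$. The middle inequality $st(5,1)\le wt(6,2)$ is immediate from the observation recorded just before the theorem: a locally $(5,1)$-thin family is precisely a $5$-locally thin family, and every $5$-locally thin family is $6$-locally $2$-thin (given any $6$ distinct members, isolate a point lying in exactly one of some fixed $5$ of them; among all $6$ members that point then has weight $1$ or $2$). Hence every locally $(5,1)$-thin family is counted by $WM(n,6,2)$, so $SM(n,5,1)\le WM(n,6,2)$ and the rate inequality follows. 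The genuine content is therefore the outer lower bound on $st(5,1)$ and the outer upper bound on $wt(6,2)$.

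For the lower bound I would run the probabilistic deletion method with a biased product measure. Fix $p=1/5$ and generate $M$ independent random subsets $A_1,\dots,A_M$ of $[n]$, including each point in each $A_j$ independently with probability $p$. For five fixed indices the five bits at a given coordinate are i.i.d.\ Bernoulli$(p)$, so the probability that the coordinate has weight exactly one is $5p(1-p)^4$; the choice $p=1/5$ is dictated by the fact that it maximizes $5p(1-p)^4$, with optimal value $(4/5)^4=256/625$. Thus a fixed coordinate fails to isolate a member with probability $1-256/625=369/625$, and by independence across coordinates a fixed $5$-tuple is ``bad'' (no point isolates a member) with probability $(369/625)^n$. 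The expected number of bad $5$-tuples is at most $\binom{M}{5}(369/625)^n$; deleting one member from each bad $5$-tuple, and from each coincident pair (whose expected number $\binom{M}{2}(17/25)^n$ is negligible since $17/25$ beats $(625/369)^{1/2}$), leaves a genuinely $5$-locally thin family. Choosing $M=c\,(625/369)^{n/4}$ with $c$ a small constant keeps the expected number of deletions below $M/2$, so a family of size $\gtrsim(625/369)^{n/4}$ survives and $st(5,1)\ge\tfrac14\log_2\tfrac{625}{369}$. Note that the $4$ in the exponent is $k-1=5-1$, arising from the $5$-wise deletion, not from any block size.

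The upper bound $wt(6,2)<0.596$ is the part I expect to be hard. The constraint is a purely $6$-wise global condition — no six distinct members may have every coordinate of weight in $\{0,3,4,5,6\}$ — and such conditions resist a naive single-coordinate entropy bound, which only yields rate $\le 1$. The plan is to localize the constraint by a tensor-power / entropy argument: set up a convex relaxation in which one optimizes, over probability distributions on the ``column types'' a coordinate can present to a fixed $6$-subset, an objective that controls $\tfrac1n\log_2 WM(n,6,2)$, with the forbidden columns (weights $0,3,4,5,6$) entering as the active constraints, and then show the optimum lies strictly below $0.596$. The main obstacle is twofold: first, formulating the relaxation so that supermultiplicativity across coordinates is valid and the forbidden configurations are faithfully encoded; and second, solving or rigorously bounding the resulting finite optimization to extract the explicit constant $0.596$, which is a delicate numerical/convexity estimate rather than a closed-form identity. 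Once that bound is secured, combining it with the three easy inequalities closes the chain.
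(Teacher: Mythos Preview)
The paper does not supply a proof of this theorem; it is quoted from \cite{MR1860438} as background, and the paper's own contribution in Section~III is to \emph{improve} the lower bounds (to $st(5,1)>0.1965$ and $wt(6,2)>0.2522$) via a block--partition deletion method. So there is no in--paper proof to compare your proposal against.

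On the merits of your proposal: the numerical check and the monotonicity $st(5,1)\le wt(6,2)$ are handled correctly, and your random--deletion argument for $st(5,1)\ge\tfrac14\log_2\tfrac{625}{369}$ is the standard one and is sound --- the choice $p=1/5$ maximizes $5p(1-p)^4$, the exponent $1/(k-1)=1/4$ arises from the $5$-tuple deletion, and the coincident--pair term is indeed negligible for the reason you state.

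The genuine gap is the upper bound $wt(6,2)<0.596$. What you have written there is a statement of intent, not a proof: ``set up a convex relaxation \dots\ and then show the optimum lies strictly below $0.596$'' names neither the functional being optimized, nor the inequality that ties it to $\tfrac1n\log_2 WM(n,6,2)$, nor the mechanism by which the forbidden weight set $\{0,3,4,5,6\}$ forces a nontrivial bound. The constant $0.596$ in \cite{MR1860438} comes from a specific information--theoretic argument, and extracting that number requires actually carrying the machinery through to a concrete numerical optimization. Until you identify the entropy inequality you intend to invoke and execute (or rigorously bound) the resulting optimization, this half of the chain remains unproved.
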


\subsection{$t$-cancellative set families}
In this subsection, we introduce another related well-known structure, i.e., cancellative set families. A family of sets $\mathcal{F}$ (and the corresponding family of $0$-$1$ vectors) is called $t$-cancellative, if for all distinct $t+2$ members $A_1,\ldots,A_t$ and $B,C\in\mathcal{F}$,
$$A_1\cup\cdots\cup A_t\cup B\neq A_1\cup\cdots\cup A_t\cup C.$$
For the case $t=1,$ it is just called cancellative for simplicity.

Actually, as pointed out in \cite{MR2900055}, we can see that requiring the family $\mathcal{F}$ to be $t$-cancellative is equivalent to asking its representation set of $0$-$1$ vectors ($x:=x(A)=(x_1,x_2,\ldots,x_n),$ with $x_i=1$ if $i\in A$ and $x_i=0$ otherwise for $A\in\mathcal{F}$), to satisfy the following: for every $(t+2)$-tuple $(x^{(1)},x^{(2)},\ldots,x^{(t+2)})$ of distinct vectors in the set (considered in an arbitrary but fixed order), there exist at least $(t+1)$ different values of $k\in[n],$ such that the corresponding ordered $(t+2)$-tuples $(x^{(1)}_k,x^{(2)}_k,\ldots,x^{(t+2)}_k)$ are all different, while for each of them we have the sum $x^{(1)}_k+x^{(2)}_k+\cdots+x^{(t+2)}_k=1.$ That is, we can consider $t$-cancellative to be much stronger than $(t+2,t+1)$-locally thin.

Let $c(t,n)$ be the size of the largest cancellative family on $n$ elements. The problem was proposed by Erd\H{o}s and Katona \cite{MR0351890} for the case $t=1,$ and conjectured that $c(1,n)=\Theta(3^{n/3}).$ It was disproved by an elegant construction by Shearer \cite{MR1383741} showing that $c(1,3k)\ge k3^{k-2},$ leading to $c(1,n)>1.46^n$ for $n>n_0.$

The asymptotic of $c(1,n)$ was given in \cite{841182} (construction) and \cite{MR782061} (upper bound), showing that there exists a $\gamma>0$ such that
$$\frac{\gamma}{\sqrt{n}}1.5^n<c(1,n)<1.5^n.$$
Since a product of two cancellative families is again cancellative, we have $c(1,n+m)\ge c(1,n)c(1,m).$  Thus $\lim\limits_{n\to \infty} c(1,n)^{1/n}$ exists. This is not known for $t\geq 2$, so K\"{o}rner and Sinaimeri \cite{MR2346813} introduced
$$r(t):=\overline{\lim\limits_{n\to \infty}}\frac{\log_2 c(t,n)}{n}$$
for the case $t=2$ and proved $0.1139<r(2)\le 0.42.$  Later, F\"uredi \cite{MR2900055} improved this upper bound to $r(2)\le\log_25-2=0.3219\ldots.$

The rest of this paper is organized as follows: in Sections II, we will derive new lower bounds for the rates of $2$-separable matrices and give a general method to derive new lower bounds for $t$-separable matrices for $t\ge3.$ In Section III, we will discuss the lower bounds for some related combinatorial structures, i.e., locally thin set families and cancellative set families. We conclude in Section IV.

\section{Strongly $t$-separable matrices}

In this section, we will mainly concentrate on the largest rate of a $t$-SSM. We first %provide some known bounds for $t$-SSMs and then
derive an improved lower bound for a $2$-SSM. Next, we give  a general method to derive lower bounds for $t$-SSMs for $t\ge 3$ and exactly compute it for the case $t=3.$

%\subsection{Some known lower bounds for $t$-SSM}

\subsection{An improved lower bound for $R(2)$}

Inspired by the construction of Shearer in \cite{MR1383741}, in this part we provide an improved lower bound for $R(2)$ by a modified probabilistic method.

\begin{theorem}\label{thm of lower bound of R(2)}
  $R(2)\ge 0.2237$.
\end{theorem}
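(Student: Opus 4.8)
The plan is to construct a random $2$-SSM and show that with positive probability it has $M \ge 2^{0.2237 n}$ columns after a deletion step, which establishes $R(2) \ge 0.2237$. Since a $t$-SSM lies between a $t$-DM and a $\overline{t}$-SM in the hierarchy \eqref{relationship}, and since the earlier theorem identifies $t$-SSMs with matrices that are simultaneously $\overline{t}$-separable and $(t+1)$-separable, I would first translate the $2$-SSM condition into a concrete forbidden-configuration requirement on the set $\mathcal{F}$ of columns. Concretely, for vectors in $\{0,1\}^n$ viewed as subsets, I expect the condition to amount to forbidding the two "bad" patterns that fail $\overline{2}$-separability or $3$-separability: namely $c_i \vee c_j = c_k$ (a pair covering a third) and $c_i \vee c_j = c_k \vee c_\ell$ with the two pairs distinct.

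Following Shearer's idea, which the authors explicitly cite as inspiration, I would not sample columns uniformly from $\{0,1\}^n$ but rather from a carefully chosen distribution that biases the column weights toward the value that maximizes the surviving rate. The modification over a naive probabilistic argument is to partition each column into blocks or to impose a weight constraint so that the probability of each forbidden configuration is controlled and the expected number of bad configurations is subdominant to $M$. First I would pick a probability distribution $p$ on $\{0,1\}$ (equivalently a density parameter for each coordinate), sample $M_0$ i.i.d. columns according to the product measure, and compute the expected number of ordered bad triples and bad quadruples. Each such expectation factors as a product over the $n$ coordinates of a per-coordinate probability, giving expressions of the form $M_0^3 \alpha^n$ and $M_0^4 \beta^n$ for constants $\alpha, \beta$ depending on $p$. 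Deleting one column from each bad configuration yields a valid $2$-SSM of size at least $M_0 - (\text{expected bad count})$, and optimizing the trade-off gives the rate.

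The heart of the improvement, and what I expect to be the main obstacle, is the choice of the sampling distribution and the precise counting of per-coordinate contributions so that the resulting optimization beats the previous bound $0.2213$ of Fan et al. Unlike the simplest scheme, I anticipate the authors use a nonuniform measure — perhaps sampling the weight of each column from a tuned distribution, or working over a ternary-type alphabet encoding as in Shearer's $3^{n/3}$ construction — to squeeze out the extra rate. The delicate part is to correctly enumerate, coordinate by coordinate, which local patterns $(c_i(k), c_j(k), c_k(k), c_\ell(k))$ are compatible with each forbidden global configuration, multiply the resulting single-coordinate probabilities, and then solve the constrained maximization of $\log_2 M_0/n$ subject to the bad-configuration terms not overwhelming $M_0$. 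I would expect the final constant $0.2237$ to emerge from numerically optimizing the free parameter(s) in this distribution, and the genuine difficulty is setting up the counting cleanly enough that the optimization is tractable and provably yields a number strictly above the old bound.
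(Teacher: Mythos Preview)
Your translation of the $2$-SSM property into forbidden configurations is incomplete, and this is where the real gap lies. The two patterns you list --- $c_i\vee c_j=c_k$ and $c_i\vee c_j=c_k\vee c_\ell$ --- are exactly the obstructions to $\overline{2}$-separability, but the theorem you invoke says that $2$-SSM is equivalent to $\overline{2}$-separable \emph{together with} $3$-separable; you never forbid equal unions of triples. More to the point, if you work straight from Definition~\ref{def of ssm}, the bad event for a pair $\{A,B\}$ is the existence of \emph{any} $\mathcal{F}'$ with $\bigvee\mathcal{F}'=A\cup B$ and $\{A,B\}\not\subseteq\mathcal{F}'$, and $\mathcal{F}'$ may have arbitrary size. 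The paper therefore has to control, for every $s\ge 2$, configurations of the shape $A\cup B=A\cup C_1\cup\cdots\cup C_s$ and sum a geometric-type series over $s$. A deletion argument that only removes bad triples and quadruples does not produce a $2$-SSM.

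The second and decisive point is that the improvement from $0.2213$ to $0.2237$ does not come from tuning a coordinatewise Bernoulli parameter; that kind of product-measure argument is essentially what Fan et al.\ already did. The paper instead builds a highly structured deterministic family $\mathcal{H}$ in the spirit of Shearer: partition $[n]$ into blocks of $45$, split each block into $15$ triplets, and in each block take one full triplet plus a single element from each of the remaining $14$ triplets subject to a mod-$3$ parity constraint on the labels. This family has the built-in property that $A\cup B\neq A\cup C$ for all distinct $A,B,C\in\mathcal{H}$ (the parity condition forces at least two triplets in $B\triangle C$ to carry two elements, which $A$ cannot cover). Thus the $s\le 1$ cases are killed \emph{for free}, and only the $s\ge 2$ configurations need probabilistic deletion after subsampling $\mathcal{H}$ with probability $p^{n/45}$. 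The block structure then lets the bad-configuration count factor over blocks, and optimizing $p$ gives $0.2237$. Your proposal gestures at ``a ternary-type alphabet encoding as in Shearer'' but does not supply the block/triplet/parity construction or the observation that it eliminates the low-order bad cases deterministically --- and that is precisely the idea that carries the proof.
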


\begin{proof}
Without loss of generality, we assume that $45\mid n.$ Indeed, if not, we may replace $n$ by $45\lfloor\frac{n}{45}\rfloor$, which will not affect the rate since $n$ tends to infinity.
Partition $[n]$ into $n/45$ blocks each of size $45,$ and further partition each block into $15$ triplets. Within each triplet, assign labels $0,1,2$ to the elements.
%assign the elements labels $0,1,2.$
%Break $[n]$ into $n/45$ blocks of $45$ elements, and further break each block into $15$ triples. Within each triplet, assign the elements labels $0,1,2.$
For each subset in our family, for each block, select one of the triplets and take all three of its elements; select one element from each other triplet in the block, in such a way that a parity condition holds: the sum of the $14$ labels is divisible by $3.$ The number of choices for each block is then $15\times 3^{13},$ and the total number of subsets is
\begin{equation*}
  M=(15\times 3^{13})^{n/45}.
\end{equation*}
We denote this family by $\mathcal{H}.$ We claim that for arbitrary three distinct members $A, B,C\in\mathcal{H},$ we have that $A\cup B\neq A$ or $C$ and $A\cup B\neq A\cup C.$ Indeed, the first inequality is obvious. For the second inequality, assume that  $A\cup B= A\cup C,$ then $(B\triangle C)\subseteq A.$ If $B$ and $C$ select different triplets to take all three elements, then these two triplets in $(B\triangle C)$ contain two elements each. If $B$ and $C$ select the same triplets to take all three elements, then by the parity condition, there are at least two triplets in $(B\triangle C)$ containing two elements each. By the structure of $A,$ both cases contradict to that $(B\triangle C)\subseteq A.$
% which means that in each block on which $B$ and $C$ differ, there are at least two triplets where $(B\triangle C)$ contains two elements each (If $B$ and $C$ select different triplets to take all three elements, then these two triplets  ; otherwise the parity condition is used, which means that there are at least two triplets  .). By the structure of $A,$ it is impossible that $(B\triangle C)\in A$.

We choose each member of $\mathcal{H}$ independently at random with probability $p^{n/45}$ which will be determined later. We now count the number of bad structures that we may have selected.

There are two bad structures that we should forbid. The first case is $A\cup B=A\cup C_1\cup C_2\cup\ldots\cup C_s$ for some $s\ge 2$ while the second is $A\cup B=D_1\cup D_2\cup\ldots\cup D_s$ for some $s\ge 2,$ where $A,B,C_i,D_j\in\mathcal{H}.$  Actually, whenever we have the second case, we may add $A$ or $B$ in the right hand side to obtain the first case. Therefore, we only need to forbid the first case.

Now we count the number of the structures $A\cup B=A\cup C_1\cup C_2\cup\ldots\cup C_s,$ for some $s\ge 2$.
Consider a particular block of $45$ elements. If there is only one triplet on which $A\cup B$ has all three elements, then the special triplet chosen by $A$ is the same one chosen by $C_i.$ There are $15$ choices of location for this triplet. For each of $14$ other triplets, either the union has one element (in which case $B,C_i$ agree with the choice made by $A$),  or the union has two elements, in which case $B$ disagrees with $A,$ $C_i$ agrees with either $A$ or $B,$ but there must be at least one $C_i$ that agrees with $B$. The total number of choices on this triplet is at most
\begin{equation*}
  3+6(2^s-1)=3\times 2^{s+1}-3.
  \end{equation*}
Values on the last triplet are forced. The number of choices for one block, in this case, is then at most $15\times(3\times 2^{s+1}-3)^{13}.$

If $A\cup B$ contains two triplets with all elements, which means that $B$ makes a different choice of the triplet chosen by $A$, then the all-elements triplet chosen by $C_i$  must agree with either $A$ or $B$. The number of such choices is at most
\begin{equation*}
  15\times 14\times 2^s.
\end{equation*}
The other $13$ triplets again allow $3\times 2^{s+1}-3$ choices. So the number of choices for one block, in this case, is at most $210\times 2^s\times(3\times 2^{s+1}-3)^{13}.$

Since we should also forbid the structures $A\cup B=B\cup C_1\cup C_2\cup\ldots\cup C_s$ for some $s\ge 2,$ if the following inequality holds
\begin{align*}
  2\sum_{s=2}^{\infty}\left((15\times(3\times 2^{s+1}-3)^{13}+\right.
  210\left.\times 2^s\times(3\times 2^{s+1}-3)^{13})p^{s+2}\right)^{n/45} \ll (15\times 3^{13}p)^{n/45},
\end{align*}
then we can remove one element from each bad structure and the family is strongly $2$-separable with size at least, say $(15\times 3^{13}p)^{n/45}/2.$ %We take $p=(9.517\times 10^{-5})^{n/45},$ and
Dividing both sides by $(15\times 3^{13}p)^{n/45}$ and the above inequality becomes
\begin{equation*}
  2\sum_{s=2}^{\infty}\left(\frac{(210\times 2^s+15)(6\times2^s-3)^{13}}{15\times 3^{13}}p^{s+1}\right)^{n/45}\ll 1,
\end{equation*}
which is
\begin{equation*}
  2\sum_{s=2}^{\infty}\left((14\times 2^s+1)(2^{s+1}-1)^{13}p^{s+1}\right)^{n/45}\ll 1.
\end{equation*}

Now we estimate the left hand side of the above inequality. Let $f(s)=(14\times 2^s+1)(2^{s+1}-1)^{13}p^{s+1}.$ So if we have that $(2^{14}p)^{n/45}<1,$ then

\begin{align*}
  \sum_{s=2}^{\infty}f(s)^{n/45} =&\sum_{s=2}^{5}f(s)^{n/45}+\sum_{s=6}^{\infty}f(s)^{n/45} \\
   \le& \sum_{s=2}^{5}f(s)^{n/45}+\sum_{s=6}^{\infty}\left(15\times 2^s\times2^{13(s+1)}p^{s+1}\right)^{n/45} \\
   =&\sum_{s=2}^{5}f(s)^{n/45}+(15\times 2^{13}p)^{n/45}\sum_{s=6}^{\infty}\left((2^{14}p)^{n/45}\right)^{s} \\
   =& \sum_{s=2}^{5}f(s)^{n/45}+(15\times 2^{13}p)^{n/45}\frac{(2^{14}p)^{6n/45}}{1-(2^{14}p)^{n/45}}.\\
\end{align*}
Taking $p=4.487\times10^{-5},$ we have that $f(s)<1$ for $s\in\{2,3,4,5\},$ $\left((15\times2^{13}p)\times(2^{14}p)^6\right)^{n/45}<1$ and $\left(2^{14}p\right)^{n/45}<1.$ Thus, when $n$ is sufficiently large we get $\sum_{s=2}^{\infty}f(s)^{n/45}\ll1/2$ as desired.

Therefore, we have shown the existence of a $2$-SSM of length $n$ and size $\left(4.487\times10^{-5}\times 15\times 3^{13} \right)^{n/45}/2.$ Thus,
\begin{equation*}
  R(2)\ge\lim\limits_{n\to \infty}  \frac{\log_2\left( \left(4.487\times10^{-5}\times 15\times 3^{13} \right)^{n/45}/2\right)}{n}\ge 0.2237.
\end{equation*}
\end{proof}

\subsection{Improved lower bound for $R(t)$ for $t\ge3$}

Similar to the case $t=2,$ we now give a general method to derive lower bounds for $R(t)$ where $t\ge3,$ before which, the only known results follow from Corollary \ref{rate of R(t) frome relations}.

 First we define $S(a,b):=\sum_{i=1}^{b}(-1)^{b-i}{b\choose i}i^a$ to be the number of surjective functions from a set $A=\{1,\ldots,a\}$ into a set $B=\{1,\ldots,b\}.$

  We  assume that $b\mid n,$ where $b$ is a constant positive integer which only depends on $t$ and will be determined with flexility. Indeed, if $b \nmid n,$ we may replace $n$ by $b\lfloor\frac{n}{b}\rfloor $ and it will not affect the rate. Partition $[n]$ into $\frac{n}{b}$ blocks of $b$ elements each. Within each block, select one element. The total number of subsets obtained in this way is
\begin{equation*}
  M=b^{n/b}.
\end{equation*}
We denote this family by $\mathcal{H}.$

We choose each member of $\mathcal{H}$ independently at random with probability $p^\frac{n}{b}$ which will be determined later. We now count the number of bad structures that we may have selected.

For $0\le t'\le t-1,$ we should forbid
the bad structures: $A_1\cup\cdots\cup A_t=A_{i_1}\cup\cdots\cup A_{i_{t'}}\cup C_1\cup C_2\cup\cdots\cup C_s$ for some $s\ge 0.$ Since whenever we have $A_1\cup\cdots\cup A_t=A_{i_1}\cup\cdots\cup A_{i_{t'}}\cup C_1\cup C_2\cup\cdots\cup C_s,$ we will have $A_1\cup\cdots\cup A_t=A_{i_1}\cup\cdots\cup A_{i_{t'+1}}\cup C_1\cup C_2\cup\cdots\cup C_s$ for some $0\le t'\le t-2,$ we can only focus on the case $A_1\cup\cdots\cup A_t=A_{i_1}\cup\cdots\cup A_{i_{t-1}}\cup C_1\cup C_2\cup\cdots\cup C_s,$ where $\{i_1,i_2,\ldots,i_{t-1}\}\subseteq\{1,2,\ldots,t\}$.

%First consider the case: $A_1\cup\cdots\cup A_t=C_1\cup C_2\cup\cdots\cup C_s$ for some $s\ge2.$ Since  $A_i$'s are distinct and by the structure of $A_i,$ $s\ge2.$

Now we focus on the case $A_1\cup\cdots\cup A_t=A_1\cup\cdots\cup A_{t-1}\cup C_1\cup C_2\cup\cdots\cup C_s.$
Consider a particular block of $b$ elements, %If there is only one $(t+1)$-set on which $A_1\cup\cdots\cup A_t$ has all $(t+1)$ elements, then the special $(t+1)$-set chosen by $A$ and $B$ is the same one chosen by $C_i.$ There are $l$ choices of location for this $(t+1)$-set. For each of $l-1$ other $(t+1)$-sets,
we count the number of choices each $A_i$ can have.
%We first consider the $b$-set where every $A_i$ contains only one element for $1\le i\le t.$ %If in such a $(t+1)$-set, the union of $A_i$s contains $j$ different elements for some $1\le j\le t-1,$ there are $(t+1)_j$ different choices, then each $C_j$ on this $(t+1)$-set must not contain the other $t+1-j$ elements which has $j^s$
The total number is at most
\begin{equation*}
  g(s):=\sum_{j=1}^{t-1}{b\choose j}S(t,j)j^s+{b\choose t}S(t,t)(t^s-(t-1)^s),
\end{equation*}
where ${b\choose j}$ means the number of choices of $j$ positions where $A_1\cup\cdots\cup A_t$ agrees with, $S(t,j)$ means the number of ways how $t$ elements in $A_1\cup\cdots\cup A_t$ agree with the chosen $j$ positions, % surjective functions from $t$ elements into $j$ elements,
$j^s$ means the number of choices all $C_i$ can have. And the last term is the case $j=t,$ it is special because if $A_1\cup\cdots\cup A_t$ contains $t$ positions, then to make $A_1\cup\cdots\cup A_t=A_1\cup\cdots\cup A_{t-1}\cup C_1\cup C_2\cup\cdots\cup C_s$ true, the elements on this block of $C_1\cup C_2\cup\cdots\cup C_s$ can not agree only with the $(t-1)$ positions where $A_1\cup\cdots\cup A_{t-1}$ has.

%Now we consider the number of whole $(t+1)$-set which $A_1\cup\cdots\cup A_t$ contain.
%If $A_1\cup\cdots\cup A_t$  contains $m$ $(t+1)$-sets for some $1\le m\le 2,$ then the $(t+1)$-set of each $C_i$ must agree with some of these $m$ $(t+1)$-sets. There are $m^s$ choices of location for these $(t+1)$-sets.
%Values on the last $(t+1)$-set are forced. If $A_1\cup\cdots\cup A_t$  contains $m$ $(t+1)$-sets for some $3\le m\le t,$ then the $(t+1)$-set of each $C_i$ must agree with some of these $m$ $(t+1)$-sets. There are $m^s$ choices of location for these $(t+1)$-sets. The other $l-m$ $(t+1)$-sets satisfy that every $A_i$ contains only one element for $1\le i\le t.$ Now we have fixed the location of $l-m+1$ $(t+1)$ including one whole $(t+1)$-set of each $A_i$ and $C_j.$ By the construction of our family, if we can fix the elements of another $m-2$ $(t+1)$-sets, the values of the last $(t+1)$-set are forced.
%The number of choices for one block, in this case, is then at most
%\begin{equation*}
% f(s)=\sum_{m=1}^{2}{l\choose m}S(t,m)m^s\times g(s)^{l-2}+\sum_{m=3}^{t}{l\choose m}S(t,m)m^s\times g(s)^{l-m}\times\left((t+1)^{m-2}\right)^{s+t},
%\end{equation*}
%where $(l)_m$ means the number of $m$ $(t+1)$-sets where $A_1\cup\cdots\cup A_t$ can have, $m^{t-m}$ means the $(t+1)$-set of the left $A_i$s can have, $m^s$ means the $(t+1)$-sets of $C_i$s.

Therefore, if we have that
\begin{equation} \label{inequlity}
  t\sum_{s=0}^{\infty}\left(g(s)p^{s+t}\right)^{n/b}\ll \left (bp\right)^{n/b},
\end{equation}
where $t$ means that there are $t$ choices to choose $A_{i_1},\ldots,A_{i_{t-1}}$(and also note that it is a constant and the deletion of the factor $t$ in the left hand side of (\ref{inequlity}) will not affect the inequality, so for simplicity, we ignore it in the computation for the case $t=3$ later),
then we can remove one element from each bad structure and the family is strongly $t$-separable with size at least, say $\left(bp\right)^{n/b}/2,$ which has rate at least
\begin{equation}\label{rate formular}
  \frac{\log_2(bp)}{b}.
\end{equation}

For the choice of parameters $b$ and $p,$ we only need the inequality (\ref{inequlity}) to be satisfied.

Here we calculate the case $t=3,$ in the literature, the best known lower bound for a $3$-SSM follows from that $R(3)\ge R_D(3)\ge 0.079.$  Now we give an improved lower bound.
\begin{theorem}\label{theorem R(3)}
  $R(3)\ge 0.0974.$
\end{theorem}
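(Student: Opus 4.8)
The plan is to instantiate the general framework developed in this subsection with $t=3$ and a concrete choice of the block size $b$, then verify the convergence inequality~(\ref{inequlity}) numerically for a well-chosen probability $p$. First I would fix a block size $b$ (some modest constant, say in the range $b=7$ to $b=12$) and write down explicitly the function
\begin{equation*}
  g(s)=\sum_{j=1}^{2}\binom{b}{j}S(3,j)\,j^s+\binom{b}{3}S(3,3)\bigl(3^s-2^s\bigr),
\end{equation*}
using $S(3,1)=1$, $S(3,2)=6$ and $S(3,3)=6$. This gives a completely explicit expression $g(s)=\binom{b}{1}+6\binom{b}{2}2^s+6\binom{b}{3}(3^s-2^s)$, so the dominant growth in $s$ is the $6\binom{b}{3}3^s$ term. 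The goal is to choose $p$ small enough that the series $\sum_{s\ge 0}\bigl(g(s)p^{s+3}\bigr)^{n/b}$ is dominated by $(bp)^{n/b}$ as $n\to\infty$, and then read off the rate from~(\ref{rate formular}) as $\tfrac{1}{b}\log_2(bp)$.

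The key technical step, exactly as in the proof of Theorem~\ref{thm of lower bound of R(2)}, is to control the tail of the series. Since each summand is raised to the power $n/b$ and $n\to\infty$, any individual term with $g(s)p^{s+3}<bp$ becomes negligible relative to $(bp)^{n/b}$; the obstruction is only that infinitely many terms are summed. I would therefore split the sum at some cutoff $s_0$: for $s\le s_0$ I would verify by direct computation that $g(s)p^{s+3}<bp$ (equivalently $f(s):=g(s)p^{s+3}/(bp)<1$), and for $s>s_0$ I would bound $g(s)\le C\cdot 3^s$ for an explicit constant $C$ (coming from the leading $6\binom{b}{3}3^s$ term) and sum the resulting geometric series, which converges provided $(3^{?}p)^{n/b}<1$, i.e. provided the geometric ratio $3\,p^{\,1}$-type factor is below $1$. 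This mirrors precisely the estimate in the $t=2$ case where the tail was folded into a convergent geometric series and shown to be $\ll 1$.

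The optimization over $(b,p)$ is the part requiring the most care, and it is essentially a one-dimensional search once $b$ is fixed: for each candidate $b$, the binding constraint is $g(s^\ast)p^{s^\ast+3}\le bp$ at the worst index $s^\ast$ (typically a small value of $s$, often $s=2$ or $s=3$, where $g$ has already grown but the power $p^{s+3}$ has not yet crushed it), and this pins down the largest admissible $p$. Substituting that $p$ into $\tfrac{1}{b}\log_2(bp)$ and scanning over $b$ yields the best rate. I expect the claimed value $R(3)\ge 0.0974$ to emerge for a specific $(b,p)$ pair that the authors have tuned; my plan is to identify that pair, confirm $f(s)<1$ holds at the finitely many small indices and that the geometric tail is summable, and thereby conclude the existence of a $3$-SSM of size at least $(bp)^{n/b}/2$ with the stated rate.

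The main obstacle I anticipate is not conceptual but the bookkeeping in the combinatorial count $g(s)$: one must correctly account for the special $j=t$ term, where the contribution $3^s-2^s$ (rather than $3^s$) reflects that the covering sets $C_1,\dots,C_s$ cannot all avoid the coordinate witnessing that $A_t$ contributes something outside $A_1\cup\cdots\cup A_{t-1}$. Getting this subtraction right on the full-support triplet block is exactly the analogue of the ``at least one $C_i$ must agree with $B$'' argument in the $t=2$ proof, and it is where an off-by-one in the exponent would silently weaken or invalidate the bound. Once $g(s)$ is verified, the remaining analytic estimate and the numerical tuning of $(b,p)$ are routine.
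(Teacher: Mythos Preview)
Your plan is essentially the paper's own argument: write out $g(s)$ explicitly, split off a few small-$s$ terms, dominate the tail by the leading $6\binom{b}{3}3^s$ contribution as a geometric series with ratio $(3p)^{n/b}<1$, and then read off the rate $\tfrac{1}{b}\log_2(bp)$. Two calibration remarks: the paper's optimizing pair is $(b,p)=(6,\,0.24999)$, just outside your suggested search window (though $b=7$ with $p$ slightly below $1/\sqrt{19}$ also clears $0.0974$); and because for $t=3$ the sum in~(\ref{inequlity}) starts at $s=0$ rather than $s=2$, the binding constraint is the $s=0$ term $(3b-2)p^{2}<1$, not $s=2$ or $s=3$ as you anticipate.
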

\begin{proof}
  We substitute $t=3$ in the process discussed above, we get that
  $$g(s):=b(b-1)(b-2)3^s-b(b-1)(b-5)2^s+b.$$
  Now we need to determine $b$ and $p$ such that the following is satisfied
  $$\sum_{s=0}^{\infty}\left(g(s)p^{s+3}\right)^{n/b}\ll \left(bp\right)^{n/b}, $$
  which follows from (\ref{inequlity}) and as we said before, we omit the coefficient $3$ in the left hand side.
  Dividing both sides by the right hand side, we get
  \begin{equation}\label{t=3}
  \sum_{s=0}^{\infty}\left(((b-1)(b-2)3^s-(b-1)(b-5)2^s+1)p^{s+2}\right)^{n/b}\ll 1.
  \end{equation}
  Assume that $b\ge 6$ and $3p<1,$ we have that
  \begin{align*}
    & \text{LHS of } (\ref{t=3})   \\
     \le& \left((3b-2)p^2\right)^{n/b}+ \sum_{s=1}^{\infty}\left((b-1)(b-2)3^s p^{s+2}\right)^{n/b} \\
     \le& \left((3b-2)p^2\right)^{n/b}+\left((b-1)(b-2)p^2\right)^{n/b}\sum_{s=1}^{\infty}\left((3p)^{n/b}\right)^{s} \\
     \le& \left((3b-2)p^2\right)^{n/b}+\left((b-1)(b-2)p^2\right)^{n/b}\frac{(3p)^{n/b}}{1-(3p)^{n/b}} \\
     \le& \left((3b-2)p^2\right)^{n/b}+\frac{\left(3(b-1)(b-2)p^3\right)^{n/b}}{1-(3p)^{n/b}}.
  \end{align*}
  Set $b=6$ and $p=0.24999,$ and let $n$ tend to infinity, it is easy to check that the following is true
  $$\left((3b-2)p^2\right)^{n/b}+\frac{\left(3(b-1)(b-2)p^3\right)^{n/b}}{1-(3p)^{n/b}} \ll 1.$$

  Therefore, by (\ref{rate formular}), we have that
  $$R(3)\ge 0.0974.$$

\end{proof}

\section{Applications for combinatorial structures}
In this section, we give improved lower bounds for some combinatorial structures, including $k$-locally thin set families and $t$-cancellative set families.

\subsection{$k$-locally thin families}
%In \cite{MR1810146}, Alon, K\"orner and Monti started the research for $k$-locally thin families and gave the following theorems.
%\begin{theorem}(\cite{MR1810146})
%  $$\frac{1}{3}(6-\log_{2}37)\le st(4,1)<\frac{1}{2}.$$
%\end{theorem}
%\begin{theorem}(\cite{MR1810146})
%  $$st(k,1)\le\frac{\log_{2}3}{2},~~ \text{ for every } k\ge5.$$
%\end{theorem}
In this subsection, we give improved lower bounds for both $5$-locally thin and $6$-locally $2$-thin set families.
\begin{theorem}\label{theorem lower bound for 5-loaclly thin}
  $st(5,1)> 0.1965.$
\end{theorem}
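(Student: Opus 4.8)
The plan is to adapt the modified probabilistic method of Theorem \ref{thm of lower bound of R(2)} to the $5$-locally thin setting (recall that $5$-locally thin is exactly locally $(5,1)$-thin). Because this condition only constrains $5$-tuples of members, rather than unions of unboundedly many sets, the construction should be considerably simpler than in the strongly separable case: no auxiliary sets $C_i$ and no infinite series over $s$ need appear. Concretely, I would fix a block size $b$ (to be optimized, and which will turn out to be $b=5$), assume $b\mid n$, partition $[n]$ into $n/b$ blocks of size $b$, and let the base family $\mathcal{H}$ consist of all subsets selecting exactly one element from each block, so that $|\mathcal{H}|=b^{n/b}$. A member contains a point $x$ precisely when it selects $x$ in the block containing $x$; hence among any $5$ members, $x$ lies in exactly one of them iff exactly one of the five selections in $x$'s block equals $x$.

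Next I would characterize the bad $5$-tuples, namely five distinct members with no point lying in exactly one of them. Such a configuration forces, in every block, that the multiset of five chosen positions has no value of multiplicity exactly $1$; since five selections must split into parts each of size at least $2$, the only admissible per-block patterns are $\{5\}$ (all five coincide) and $\{3,2\}$. These contribute $b$ and $\binom{5}{3}b(b-1)$ ordered configurations respectively, giving a per-block total $b(10b-9)$, and thus at most $(b(10b-9))^{n/b}$ bad ordered $5$-tuples in $\mathcal{H}$.

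I would then keep each member of $\mathcal{H}$ independently with probability $p^{n/b}$. The expected family size is $(bp)^{n/b}$, while the expected number of surviving bad $5$-tuples is at most $(b(10b-9)p^{5})^{n/b}$; requiring the latter to be $\ll (bp)^{n/b}$ reduces, after dividing, to $(10b-9)p^{4}<1$. Deleting one member from each surviving bad $5$-tuple then yields a $5$-locally thin family of size about $(bp)^{n/b}/2$, so $st(5,1)\ge \frac{\log_2(bp)}{b}$. Taking $b=5$ and letting $p\uparrow 41^{-1/4}$ makes this tend to $\frac{1}{5}\bigl(\log_2 5-\tfrac14\log_2 41\bigr)\approx 0.19651>0.1965$.

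The main obstacle, and the reason the construction must be tuned carefully, is that the resulting bound is essentially tight to the stated precision: the boundary rate at $b=5$ is only about $0.196508$. Consequently the decisive step is the exact enumeration of the bad per-block patterns; any overcount of the figure $b(10b-9)$ — for instance, failing to discard multiplicity-one splits such as $\{2,2,1\}$, or mishandling the distinctness of the five members — would shrink the admissible range of $p$ and push the bound below $0.1965$. I would therefore verify the partition-of-five analysis rigorously, confirm that $b=5$ maximizes $\frac{1}{b}\bigl(\log_2 b-\tfrac14\log_2(10b-9)\bigr)$ over neighboring integers, and check that $p$ may be taken close enough to $41^{-1/4}$ for the strict inequality to persist as $n\to\infty$.
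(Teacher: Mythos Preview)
Your proposal is correct and essentially identical to the paper's proof: the paper uses the same block construction $\mathcal{H}$ with one element per block of size $b$, the same random retention with probability $p^{n/b}$, the same per-block bad count $h(b)=b+\binom{5}{3}b(b-1)=b(10b-9)$, and the same choice $b=5$, $p\approx 0.39518\approx 41^{-1/4}$ giving rate $\log_2(bp)/b>0.1965$. Your added remarks on optimality over neighboring $b$ and on the tightness of the constant are extra sanity checks not in the paper, but the argument itself matches exactly.
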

\begin{proof}
  Let $\mathcal{H}$ be the family as defined in Section II.B. Also, we choose each member of $\mathcal{H}$ independently at random with probability $p^{\frac{n}{b}}$ which will be determined later.

  We now count the number of bad structures that we may have selected.
  For any $5$ distinct members $A_1,A_2,\ldots,A_5$ in $\mathcal{H},$ we say that it is bad if there is no point that belongs to exactly one member. Consider a particular block of $b$ elements, we count the number of choices each $A_i$ can have. It is at most
  $$ h(b) = b + b(b-1){5\choose 3},$$
  where the first term in the right hand side means the number of choices that all the $5$ elements agree with one position among $b$ positions while the second term means that these $5$ elements agree with two positions and $3$ of them agree with one and the other $2$ agree with another, for simplicity, we denote this by $\{3,2\}$-type. It is not hard to check that these are the only two bad structures. Indeed, if $5$ elements agree with $2$ positions, then the only other case is $\{4,1\}$-type, which is not bad; If $5$ elements agree with more than $2$ positions, then by pigeonhole, there exists at least one position that has only one element. Thus, if we have that
  $$ \left(h(b)p^5\right)^{n/b} \ll (bp)^{n/b},$$
  we can remove one set from each bad structure and the family is $5$-locally thin.

  Set $b=5$ and $p=0.39518,$ and let $n$ tend to infinity, then the above inequality is true, and we get a $5$-locally thin set family with rate at least
  $$ \frac{\log_2(bp)}{b}>0.1965,$$
  which completes the proof.
\end{proof}

\begin{theorem}
  $wt(6,2) > 0.2522.$
\end{theorem}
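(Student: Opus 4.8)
The plan is to adapt the probabilistic construction used in Theorem~\ref{theorem lower bound for 5-loaclly thin} to the weaker notion of $6$-locally $2$-thin families. Recall that a family is $6$-locally $2$-thin if, for any $6$ distinct members, there exists at least one point contained in exactly $1$ or exactly $2$ of them. First I would take the same block family $\mathcal{H}$ from Section~II.B, partitioning $[n]$ into $n/b$ blocks of $b$ elements and selecting exactly one element per block in each member, so that $|\mathcal{H}|=b^{n/b}$, and again select each member independently with probability $p^{n/b}$.

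The heart of the argument is to count, within a single block, the number of ways a $6$-tuple $A_1,\ldots,A_6$ of members can be \emph{bad}, i.e.\ restricted to that block so that every occupied position is covered by $0$, $3$, $4$, $5$, or $6$ of the six selections (never exactly $1$ or $2$). Since each $A_i$ picks exactly one of the $b$ positions in the block, a configuration is determined by how the six selections distribute among positions, and badness forces each nonempty position to receive a multiplicity in $\{3,4,5,6\}$. The only integer partitions of $6$ into parts each at least $3$ are $6=6$, $6=3+3$. So the bad types within a block are: all six agreeing on one position ($b$ choices of position, one way to assign), or the $\{3,3\}$-type where the six split $3$--$3$ across two distinct positions. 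I would compute the per-block bad count $h(b)$ as a sum over these two types, roughly $h(b)=b+\binom{b}{2}\binom{6}{3}$ (choosing the two positions and choosing which $3$ of the $6$ members land on the first), being careful about whether the two $3$-blocks are ordered or unordered so as not to double-count. This $h(b)$ plays the role that $b+b(b-1)\binom{5}{3}$ did in the $5$-locally thin proof.

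With $h(b)$ in hand, the remaining steps mirror Theorem~\ref{theorem lower bound for 5-loaclly thin} exactly: a $6$-tuple is globally bad only if it is bad in every one of the $n/b$ blocks, so the expected number of bad $6$-tuples in the random subfamily is at most a constant times $\bigl(h(b)p^{6}\bigr)^{n/b}$, since each of the six members contributes a factor $p^{n/b}$. If
\begin{equation*}
  \bigl(h(b)p^{6}\bigr)^{n/b}\ll (bp)^{n/b},
\end{equation*}
then deleting one member from each surviving bad $6$-tuple leaves a $6$-locally $2$-thin family of size at least $(bp)^{n/b}/2$, yielding rate at least $\frac{\log_2(bp)}{b}$ by formula~(\ref{rate formular}). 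I would then optimize over the constant $b$ and the probability $p<1$; choosing a suitable $b$ and $p$ should drive $h(b)p^{6}<bp$ while keeping $\log_2(bp)/b>0.2522$.

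The main obstacle is purely in the combinatorial bookkeeping of $h(b)$: one must correctly enumerate the bad multiplicity types (here only $6$ and $3+3$, which is cleaner than the $5$-locally thin case since no mixed type like $\{4,1\}$ can sneak in — the threshold for ``thin'' being $2$ rather than $1$ changes precisely which multiplicities are forbidden) and avoid over- or under-counting the $\{3,3\}$-split, after which the calculus reduces to a geometric-series tail estimate that is routine. Verifying that the specific choice of $(b,p)$ simultaneously satisfies the convergence inequality and beats the target rate $0.2522$ is the final numerical check, analogous to the $p=0.39518$, $b=5$ choice in the previous theorem.
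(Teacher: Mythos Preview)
Your proposal is correct and follows essentially the same approach as the paper: the paper likewise takes $h(b)=b+\binom{b}{2}\binom{6}{3}$, imposes the condition $(h(b)p^6)^{n/b}\ll(bp)^{n/b}$, and then chooses $b=4$, $p=0.50318$ to obtain rate $\log_2(bp)/b>0.2522$. The only minor slip is your mention of a ``geometric-series tail estimate'': unlike the SSM arguments there is no infinite sum here, just the single inequality $h(b)p^{5}<b$, so the final check is even simpler than you suggest.
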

\begin{proof}
  The proof is similar to that of Theorem \ref{theorem lower bound for 5-loaclly thin}. Let $\mathcal{H},b,p$ be the parameters of the same meaning as in Theorem \ref{theorem lower bound for 5-loaclly thin}.

  For any $6$ distinct members $A_1,A_2,\ldots,A_6$ in $\mathcal{H},$  we say that it is bad if there is no point that belongs to $1$ or $2$ members. Consider a particular block of $b$ elements, we count the number of choices each $A_i$ can have. It is at most
  $$h(b) = b+{b\choose 2}{6\choose 3},$$
  where the first term in the right hand side means the number of choices such that all $6$ elements agree with one position while the second term means that the number of ways that $6$ elements agree with two positions, $3$ agree with one and the other $3$ agree with another, i.e., it is $\{3,3\}$-type. We claim that these are the only bad structures. Indeed, if $6$ elements agree with two positions, then the other conditions are $\{5,1\}$-type and $\{4,2\}$-type, which are both not bad; if $6$ elements agree with at least three positions, then by pigeonhole, there exists at least one position that has one or two elements. Thus, if we have that
  $$ \left(h(b)p^6\right)^{n/b}\ll (bp)^{n/b},$$
  we can remove one set from each bad structure and the family is $6$-locally $2$-thin.

  Set $b=4,$ $p= 0.50318,$ and let $n$ tend to infinity, then the above inequality is true, and we get a $6$-locally $2$-thin set family with rate at least
  $$ \frac{\log_2(bp)}{b}>0.2522,$$
  which completes the proof.
\end{proof}

\subsection{$t$-cancellative set families}
In this subsection, we give a modified probabilistic method to improve the lower bound for $r(2)$. For $t\ge 3,$ we could just modify the method we use in Section II.B by setting $s=1.$

%Here we improve the lower bound for $r(2).$
\begin{theorem}
  $r(2)>0.1170.$
\end{theorem}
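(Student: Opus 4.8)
The plan is to adapt the modified probabilistic method developed in Section II.B for $t$-SSMs to the $t$-cancellative setting, specializing to $t=2$. Recall that a family is $2$-cancellative if for all distinct members $A_1,A_2,B,C$ we have $A_1\cup A_2\cup B\neq A_1\cup A_2\cup C$. I would start with the same block construction: assume $b\mid n$, partition $[n]$ into $n/b$ blocks of size $b$, select exactly one element from each block, yielding the base family $\mathcal{H}$ of size $M=b^{n/b}$. Each member of $\mathcal{H}$ is then chosen independently at random with probability $p^{n/b}$, where $b$ and $p$ will be tuned at the end.

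The core of the argument is to count, within a single block, the number of configurations of the relevant members that produce a forbidden (bad) structure, since the block-independence makes the per-block count raise to the $n/b$ power. Here the bad event is $A_1\cup A_2\cup B=A_1\cup A_2\cup C$ with $A_1,A_2,B,C$ distinct. The key observation, exactly as in the $t$-SSM analysis, is that this equality forces $(B\triangle C)\subseteq A_1\cup A_2$ on every block, so I would enumerate the block-local patterns of the union $A_1\cup A_2\cup B\cup C$ compatible with this containment. Following the pattern of $g(s)$ in Section II.B, I expect a per-block bound built from surjection counts $S(\cdot,\cdot)$ and binomial factors ${b\choose j}$, recording how many of the $b$ positions the union occupies, how the four (or fewer effective) members distribute among them, and the constraint that $B$ and $C$ must differ somewhere inside $A_1\cup A_2$. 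Writing $h(b)$ for the resulting per-block count, the success condition becomes an inequality of the shape $\bigl(h(b)\,p^{4}\bigr)^{n/b}\ll (bp)^{n/b}$, after which one deletes one member from each surviving bad quadruple and obtains a $2$-cancellative family of size at least $(bp)^{n/b}/2$, giving rate $\frac{\log_2(bp)}{b}$ by the formula in (\ref{rate formular}).

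To clear the target $r(2)>0.1170$, I would then search over small integer block sizes $b$ and admissible probabilities $p<1/b$ (so that the geometric-type tail is controlled, just as $3p<1$ was needed for $t=3$), choosing the pair that maximizes $\frac{\log_2(bp)}{b}$ subject to the counting inequality holding for large $n$. By analogy with the $5$-locally thin and $6$-locally $2$-thin computations, which used $b\in\{4,5\}$, I expect a comparably small block size to be optimal here, and the final step is simply to verify numerically that the chosen $(b,p)$ satisfies the inequality and yields a rate exceeding $0.1170$.

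The main obstacle I anticipate is getting the per-block count $h(b)$ exactly right, in particular correctly subtracting the configurations in which $B$ and $C$ coincide on the block (which do not violate cancellativity) versus those in which they genuinely differ within $A_1\cup A_2$. This is the combinatorial heart of the argument: the equality $A_1\cup A_2\cup B=A_1\cup A_2\cup C$ is weaker than the union-distinctness forbidden for $t$-SSMs, so the admissible block patterns differ, and an off-by-one in the surjection/containment bookkeeping would propagate into a wrong inequality and hence a wrong rate. Once $h(b)$ is pinned down correctly, the remaining tail estimation and parameter optimization are routine and mirror the earlier proofs in the paper.
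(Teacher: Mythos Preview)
Your approach is essentially the paper's. The paper's proof is even shorter than you anticipate: it simply notes that the $2$-cancellative bad structure $A_1\cup A_2\cup B=A_1\cup A_2\cup C$ is, after relabeling $B=A_3$ and $C=C_1$, exactly the $s=1$ instance of the $t=3$ SSM analysis in Section~II.B. Hence the per-block count is literally $g(1)=b+6b(b-1)+b(b-1)(b-2)$, already computed there, and no new combinatorics is required. The success condition is the single inequality $(g(1)p^4)^{n/b}\ll (bp)^{n/b}$, and the paper takes $b=5$, $p=0.3001$.

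Two corrections to your plan. First, there is no geometric tail here: the cancellative condition involves exactly four members, so there is no sum over $s$ and no analogue of the $3p<1$ constraint. Your stated restriction $p<1/b$ is not only unnecessary but fatal, since it forces $bp<1$ and hence $\log_2(bp)/b<0$; indeed the paper's choice $p=0.3001>1/5$ violates it. Drop that constraint entirely. Second, your worry about subtracting block-local coincidences of $B$ and $C$ is a red herring: the per-block count upper-bounds the number of ordered $4$-tuples from $\mathcal{H}$ that can form a bad structure, and global distinctness of the members does not impose block-wise distinctness, so no such subtraction is performed (or needed) in $g(1)$.
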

\begin{proof}
  The proof is similar to that of Theorem \ref{theorem R(3)}. The only difference is that here we should just focus on the case $s=1.$ That is, we need to forbid the case $A_1\cup A_2\cup A_3=A_1\cup A_2\cup C_1$ for any distinct $A_1,A_2,A_3,C_1\in \mathcal{H},$ where $\mathcal{H}$ is defined in Section II.B.  Use the same notation, it suffices to show that the following is true
  $$(g(1)p^4)^{n/b}\ll (bp)^{n/b},$$
  where $g(1)=b+6b(b-1)+b(b-1)(b-2).$
  Setting $b=5$ and $p=0.3001,$ we have that
  $$r(2)\ge \frac{\log_2(bp)}{b}>0.1170.$$
\end{proof}
%Actually, if we restrict the value of $s,$ i.e., let $0\le s\le t,$ then by the similar discussion, we can also get a lower bound for the rate of $\overline{t}$-SM $R_S(\overline{t}).$

\section{Conclusions}
%In this paper, we derive some new lower bounds for the rates of several interesting structures, such as $2$-Frameproof codes, $\overline{2}$-separable codes, strongly $t$-separable matrices and $t$-cancellative set families. However, the gap between the lower and upper bounds are still large.%We think that the methods we use in this paper can also be generalized to deal with other kinds of combinatorial structures.

In this paper, we consider the lower bounds for  the rates of several interesting structures, such as strongly $t$-separable matrices, locally thin set families and cancellative set families.  As a consequence, by a modified probabilistic construction, we improve the existing lower bounds.  However, there is still a gap between the known upper bounds and lower bounds for these structures. It would be of interest to narrow the gap by new methods.

\bibliographystyle{abbrv}
\bibliographystyle{IEEEtran}
\bibliography{REF}
\end{document}